\definecolor{linkblue}{RGB}{1,1,190}
\definecolor{citered}{RGB}{190,1,1}
\def\arxiv#1{{\small\href{http://www.arxiv.org/abs/#1}{\path{arXiv:#1}}}}
\theoremstyle{plain}
\newtheorem{theorem}{\bf Theorem}[section]
\newtheorem{corollary}[theorem]{\bf Corollary}
\theoremstyle{definition}
\newcommand\ec { \color{black}}
\newcommand{\op}{{\text{\rm op}}}
\newcommand{\N}{\mathbb N}
\newcommand{\bdot}{\boldsymbol{\cdot}}
 \DeclareMathOperator{\ord}{ord}
\numberwithin{equation}{section}
\begin{document}

\title[On the isomorphism problem for monoids of product-one sequences]{On the isomorphism problem for \\ monoids of product-one sequences}

\author{Alfred Geroldinger and Jun Seok Oh}

\address{Department of Mathematics and Scientific Computing, University of Graz, NAWI Graz, Heinrichstraße 36, 8010 Graz, Austria}
\email{alfred.geroldinger@uni-graz.at, https://imsc.uni-graz.at/geroldinger}

\address{Department of Mathematics Education, Jeju National University, Jeju 63243, Republic of Korea}
\email{junseok.oh@jejunu.ac.kr}
\urladdr{}

\thanks{This work was supported by the Austrian Science Fund FWF, Project Number P36852, and by the National Research Foundation of Korea (NRF) grant funded by the Korea government (MSIT) (NRF-2021R1G1A1095553).}

\subjclass{11B30, 20M13, 20M14}
\keywords{Isomorphism problems, torsion groups, product-one sequences}

\begin{abstract}
Let $G_1$ and $G_2$ be torsion groups. We prove that the monoids of product-one sequences over $G_1$ and over $G_2$ are isomorphic if and only if the groups $G_1$ and $G_2$ are isomorphic. This was known before for abelian groups.
\end{abstract}

\maketitle

\smallskip
\section{Introduction} \label{1}
\smallskip

Let $G$ be a group. Elements of the free abelian monoid over $G$ are called sequences over $G$ whence -- in combinatorial terms -- sequences are finite unordered sequences with repetition allowed. A sequence is a product-one sequence if its terms can be ordered so that their product is the identity element of $G$. The set $\mathcal B (G)$ of all product-one sequences is a submonoid of the free abelian monoid over $G$ (in combinatorial terms, the underlying operation is just the concatenation of sequences) and it is called the monoid of product-one sequences over $G$.

In case of abelian groups, additive notation and terminology are dominant, whence the term zero-sum sequences is used.
Pushed forward by a huge variety of connections in number theory, algebra, discrete geometry, and combinatorics, the combinatorial and algebraic properties of zero-sum sequences and their associated monoids have been under intense study for decades. Classic combinatorial zero-sum invariants include the Davenport constant and the {E}rd{\H{o}}s-{G}inzburg-{Z}iv constant. For strong recent progress on the latter see \cite{Sa-Za24a}, and for  some books and surveys whose focus is on algebraic applications, see \cite{Ge-HK06a, Ge-Ru09, Gr13a, Sc16a}.

The investigation of combinatorial invariants of product-one sequences over non-abelian groups goes also back to the 1960s. For recent progress, we refer to \cite{Gr13b, Ge-Gr13a, Br-Ri18a, Oh-Zh20b, Ga-Li-Qu21, Zh21, Qu-Li-Tee22a, Qu-Li22a, Qu-Li22b, Av-Br-Ri23a, An-Cz-Do-Sz25}.
Since the last decade algebraic properties of the monoid of  product-one sequences have found  increased interest, which was pushed forward from new applications both in invariant theory (in particular, in the study of the Noether Number) and in the factorization theory of monoids and domains (e.g., monoids of product-one sequences over finite groups are C-monoids, which are a combinatorial analogue of the more complicated C-domains, \cite{Re13a, Ge-Zh19c, Oh22a}). For a sample of papers in these directions, see \cite{Cz-Do-Ge16a, Cz-Do-Sz18, Fa-Zh22a, Ha-Zh19a, Oh19b, Oh20a, G-G-O-Z22a}.

In the present paper, we study the Isomorphism Problem for monoids of product-one sequences, which runs as follows.

\smallskip
\noindent
{\bf The Isomorphism Problem.}
Let $G_1$ and $G_2$ be    groups  such that the monoids $\mathcal B (G_1)$ and  $\mathcal B (G_2)$ are isomorphic. Are the groups $G_1$ and $G_2$  isomorphic?

\smallskip
The Isomorphism Problem has an affirmative answer for abelian groups, and the argument runs as follows. Suppose that $G_1$ and $G_2$ are abelian and, to exclude a trivial case, suppose that both have at least three elements. Then the monoids $\mathcal B (G_1)$ and $\mathcal B (G_2)$ are Krull and their class groups are isomorphic to $G_1$ and $G_2$. Since isomorphic Krull monoids have isomorphic class groups, we are done (for details see \cite[Corollary 2.5.7]{Ge-HK06a}). An affirmative answer to the Isomorphism Problem is a necessary condition for an affirmative answer to the Characterization Problem.

\smallskip
\noindent
{\bf The Characterization Problem.}
Let $G_1$ and $G_2$ be  finite  groups with Davenport constant $\mathsf D (G_1) \ge 4$ such that their systems of sets of lengths $\mathcal L (G_1)$ and $\mathcal L (G_2)$ coincide. Are the groups $G_1$ and $G_2$  isomorphic?

\smallskip
As usual,
\[
\mathcal L (G) = \big\{ \mathsf L (B) \colon B \in \mathcal B (G) \big\}
\]
denotes the collection of sets of  lengths $\mathsf L (B)$, where $\mathsf L (B)$ is the set of all factorization lengths  $k \in \N$ for which $B = U_1 \bdot \ldots \bdot U_k$ for some minimal product-one sequences $U_1, \ldots, U_k$.
The standing conjecture is that the Characterization Problem has an affirmative answer for finite abelian groups (for this topic, see \cite{Pl-Sc19a},  the survey \cite{Ge-Zh20a}, and note that $\mathcal L (G_1)=\mathcal L (G_2)$ for any two infinite abelian groups $G_1$ and $G_2$). In the non-abelian setting, a first step was done  for finite groups with  Davenport constant 6 (\cite[Theorem 4.7]{Oh19b}) and for finite dihedral groups (\cite[Corollary 6.13]{G-G-O-Z22a}).

Let us go back to the Isomorphism Problem in the non-abelian setting. The argument, which we sketched above for abelian groups, does not carry over to the general case, because for a given group $G$, its monoid of product-one sequences $\mathcal B (G)$ is Krull if and only if $G$ is abelian (\cite[Theorem 3.14]{Fa-Zh22a}). Nevertheless, for finite groups, an analogous strategy could run as follows. Let $G_1$ and $G_2$ be finite groups. Then $\mathcal B (G_1)$ and $\mathcal B (G_2)$ are C-monoids (\cite[Theorem 3.2]{Cz-Do-Ge16a}). If the monoids are isomorphic, then their class semigroups (which coincide with the usual class groups in the abelian case) are isomorphic. However, so far we know too little about the structure of the class semigroups in order to get  back information on the groups. Thus, we choose a different strategy. We build substantially on recent results by Fadinger and Zhong (\cite{Fa-Zh22a}), which allow us to   give an affirmative answer to the Isomorphism Problem for torsion groups.

\smallskip
\begin{theorem} \label{1.1}
Let $G_1$ and $G_2$ be groups and suppose that $G_1$ is a torsion group. Then the groups $G_1$ and $G_2$ are isomorphic if and only if their monoids of product-one sequences $\mathcal B (G_1)$ and $\mathcal B (G_2)$ are isomorphic.
\end{theorem}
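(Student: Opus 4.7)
The forward implication is immediate: any group isomorphism $\psi\colon G_1\to G_2$ extends prime-by-prime to the free abelian monoids $\mathcal F(G_1)\to\mathcal F(G_2)$, and this extension preserves product-one sequences, so it restricts to a monoid isomorphism $\mathcal B(G_1)\to\mathcal B(G_2)$. For the reverse direction I fix a monoid isomorphism $\Phi\colon\mathcal B(G_1)\to\mathcal B(G_2)$ with $G_1$ torsion, and the strategy is to reconstruct $G$ from $\mathcal B(G)$ in a way that is visible purely through the monoid. The first step is to transfer the torsion hypothesis: the results of \cite{Fa-Zh22a} relate algebraic properties of $\mathcal B(G)$ (Krullness, seminormality, and the structure of its complete integral closure) to the group theory of $G$, and I would extract from them an intrinsic monoid-theoretic property that characterises torsion of $G$, so that it transfers along $\Phi$ and forces $G_2$ to be torsion as well.

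The second step is to recover the underlying set of $G$ by isolating distinguished families of atoms of $\mathcal B(G)$. The identity $1_G$ is a prime element of $\mathcal B(G)$ and can be singled out intrinsically; this gives $\Phi(1_{G_1})=1_{G_2}$. For each $g\in G\setminus\{1_G\}$ the length-two sequence $g\bdot g^{-1}$ is an atom of $\mathcal B(G)$ and, since $G$ is torsion, the pure-power sequence $g^{\,\ord(g)}$ is an atom as well. These two families of atoms can be pinned down monoid-theoretically via divisibility relations inside $\mathcal B(G)$, and $\Phi$ restricts to bijections between the corresponding sets of atoms in $\mathcal B(G_1)$ and $\mathcal B(G_2)$. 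Combining these bijections produces a set-theoretic bijection $\psi\colon G_1\to G_2$ that fixes the identity and is compatible with the involution $g\mapsto g^{-1}$.

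The third and hardest step is to promote $\psi$ to a group isomorphism. The length-three product-one atoms supply, for each pair $g,h\in G$, the unordered set $\{(gh)^{-1},(hg)^{-1}\}$, so $\Phi$ preserves the unordered product $\{gh,hg\}$; in particular the commutativity pattern of $G$ and its abelianisation are already pinned down. The remaining ambiguity between $gh$ and $hg$ in the non-abelian case is the main obstacle of the proof, and since the class semigroup of $\mathcal B(G)$ is not rich enough to resolve it (as noted in the introduction), one must exploit product-one sequences of higher length in order to reconstruct the embedding $\mathcal B(G)\hookrightarrow\mathcal F(G)$, and with it the group operation. Here the torsion hypothesis plays the essential role of providing, for every element, enough finite ``testing'' atoms to detect the multiplicative relations, and I expect that the full force of \cite{Fa-Zh22a} will be needed to make this final reconstruction work.
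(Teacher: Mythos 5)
Your outline matches the paper's strategy at the level of headings, but two of your three steps are placeholders rather than proofs, and one of them is the entire mathematical content of the theorem. Concerning the recovery of the bijection $G_1\to G_2$: you assert that the atoms $g\bdot g^{-1}$ and $g^{[\ord(g)]}$ ``can be pinned down monoid-theoretically via divisibility relations'', but you never exhibit such relations, and it is unclear how to recognise the length of a sequence inside the abstract monoid $\mathcal B(G)$ before the ambient free abelian monoid has been recovered. The paper's actual mechanism is cleaner: after splitting off the abelian case (via the characterisation of Krullness in \cite{Fa-Zh22a}), one uses that for torsion $G$ the inclusion $\widehat{\mathcal B(G)}\hookrightarrow\mathcal F(G)$ is a divisor theory, so by the Uniqueness Theorem for divisor theories (\cite[Theorem 2.4.7]{Ge-HK06a}) the induced isomorphism $\widehat{\mathcal B(G_1)}\to\widehat{\mathcal B(G_2)}$ extends to an isomorphism $\mathcal F(G_1)\to\mathcal F(G_2)$, which necessarily comes from a bijection of the bases $G_1\to G_2$. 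Your observation that length-three sequences pin down the unordered product $\{gh,hg\}$ is correct and is exactly the paper's assertion \textbf{A3}.

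The genuine gap is your third step. Resolving the pointwise ambiguity $\varphi(gh)\in\{\varphi(g)\varphi(h),\,\varphi(h)\varphi(g)\}$ into a globally coherent choice --- showing $\varphi$ is either a homomorphism everywhere or an anti-homomorphism everywhere --- is the heart of the theorem, and deferring to ``the full force of \cite{Fa-Zh22a}'' is not an argument: that reference contains no such rigidity statement (it only yields $G_1/G_1'\cong G_2/G_2'$, as the paper itself points out). What is required, and what the paper supplies in assertions \textbf{A4}--\textbf{A7}, is an elementary but delicate combinatorial analysis: one first shows that $\varphi$ preserves inverses, powers, and the commutation pattern, and then rules out the existence of a ``mixed'' element $g_1$ admitting both a homomorphic partner $g_2$ and an anti-homomorphic partner $g_3$. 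The exclusion works by manufacturing explicit product-one sequences such as $g_1g_2\bdot g_1g_2^{-1}\bdot g_1^{-2}$ and $g_1^{n}g_3\bdot g_1^{-j}g_3^{-1}\bdot g_1^{-i}$, reading off their product-one equations to force forbidden commutation relations, and running an induction on the powers $g_1^{n}$ that produces a contradiction precisely at $n=\ord(g_1)$ --- a second, essential use of the torsion hypothesis that your sketch does not anticipate. Without some version of this argument the proposal does not prove the theorem.
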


The general case of non-torsion groups remains open, but we obtain the following corollary.

\smallskip
\begin{corollary} \label{1.2}
Let $G_1$ and $G_2$ be groups whose commutator subgroups are torsion.
If the monoids $\mathcal B (G_1)$ and $\mathcal B (G_2)$ are isomorphic, then the torsion subgroups $\mathsf T (G_1)$ and $\mathsf T (G_2)$ are isomorphic.
\end{corollary}

In Section \ref{2}, we gather the required background. The proof of Theorem \ref{1.1} will be given in Section \ref{3} and the proof of Corollary \ref{1.2} will be given in Section \ref{4}.

\smallskip
\section{Prerequisites} \label{2}
\smallskip

By a {\it monoid}, we mean a commutative cancellative semigroup with identity element. Let $H$ be a multiplicatively written monoid. Then $H^{\times}$ denotes its group of invertible elements, $\mathsf q (H)$ its quotient group,
\begin{itemize}
\item $\widetilde H = \{ x \in \mathsf q (H) \colon \text{there is $n \in \N$ such that $x^n \in H$} \}$  the {\it root closure} of $H$, and

\item $\widehat H = \{ x \in \mathsf q (H) \colon \text{there is $c \in H$ such that $cx^n \in H$ for all $n \in \N$} \}$  the {\it complete integral closure} of $H$.
\end{itemize}
Then, we have $H \subset \widetilde H \subset \widehat H \subset \mathsf q (H)$, and $H$ is {\it root closed} (resp., {\it completely integrally closed}) if $H = \widetilde H$ (resp., $H = \widehat H$). The monoid $H$ is {\it Krull} if it is completely integrally closed and satisfies the ascending chain condition on divisorial ideals. A {\it divisor theory} of $H$ is a homomorphism $\varphi \colon H \to D$ to a free abelian monoid $D$ which satisfies the following two properties.
\begin{itemize}
\item For all $a, b \in H$, we have $a$ divides $b$ (in $H$) if and only if $\varphi (a)$ divides $\varphi (b)$ (in $D$).

\item For all $\alpha \in D$, there are $a_1, \ldots, a_m \in H$ such that $\alpha = \gcd \big( \varphi (a_1), \ldots, \varphi (a_m) \big)$.
\end{itemize}
Then $H$ has a divisor theory if and only if it is Krull (\cite[Theorem 2.4.8]{Ge-HK06a}).
An element $a \in H$  is said to be {\it irreducible} (or an {\it atom}) if $a \notin H^{\times}$ and $a=bc$ with $b, c \in H$ implies that $b \in H^{\times}$ or $c \in H^{\times}$.

Let $G$ be a multiplicatively written group  with identity element $1_G=1$, and let $G_0 \subset G$ be a subset. We denote by $\mathcal F (G_0)$ the (multiplicatively written) free abelian monoid with basis $G_0$. The elements of $\mathcal F (G_0)$ are called {\it sequences} over $G_0$. Thus, in combinatorial terms, a sequence means a finite unordered sequence of terms from $G_0$  with the repetition of elements allowed. We have to distinguish carefully between the group operation in $G$ and the monoid operation in $\mathcal F (G_0)$ (in this regard, our notation coincides with the notation in all recent articles, including \cite{Gr13b, Fa-Zh22a}). To do so, we use the bold symbol $\bdot$ for the multiplication in $\mathcal F (G_0)$, whence $G = (G, \cdot)$ and $\mathcal F (G_0) = ( \mathcal F (G_0), \bdot)$. In order to avoid confusion between exponentiation in $G$ and exponentiation in $\mathcal F (G_0)$, we use brackets to denote exponentiation in $\mathcal F (G_0)$. Thus, for $g \in G_0$, $S \in \mathcal F (G_0)$, and $k \in \N_0$, we have
\[
  g^k \in G \,, \quad g^{[k]} = \underbrace{g \bdot \ldots \bdot g}_{k} \in \mathcal F (G_0) \,, \quad \text{and} \quad S^{[k]} = \underbrace{S \bdot \ldots \bdot S}_{k} \in \mathcal F (G_0) \,.
\]
Let
\[
  S = g_1 \bdot \ldots \bdot g_{\ell} = {\small \prod}^{\bullet}_{g \in G_0} g^{[\mathsf v_g (S)]}
\]
be a sequence over $G_0$. Then
\begin{itemize}
\item $\pi (S) = \{ g_{\tau (1)} \cdot \ldots \cdot g_{\tau (\ell)} \in G \colon \tau \ \text{is a permutation of } \ [1, \ell] \} \subset G$ is the {\it set of products} of $S$.
\end{itemize}
The sequence $S$ is called
\begin{itemize}
\item a {\it product-one sequence} if $1_G \in \pi (S)$,
\end{itemize}
and any ordered product in $\pi (S)$, that equals $1_G$, is called a {\it product-one equation} of $S$.
Then the set
\[
\mathcal B (G_0) = \{ S \in \mathcal F (G_0) \colon 1_G \in \pi (S) \} \subset \mathcal F (G_0)
\]
is a submonoid of $\mathcal F (G_0)$, called the {\it monoid of product-one sequences over $G_0$}. We denote by $\mathcal A (G_0)$ the set of atoms of $\mathcal B (G_0)$. It is easy to see that every $B \in \mathcal B (G_0)$ has a factorization into atoms and that there are only finitely many distinct factorizations. For any subset $G^{*}_0$ of a group $G^{*}$ and any map $\varphi \colon G_0 \to G^{*}_0$, we obtain a monoid homomorphism $\varphi \colon \mathcal F (G_0) \to \mathcal F (G^{*}_0)$, defined by $\varphi (S) = \varphi (g_1) \bdot \ldots \bdot \varphi (g_{\ell})$. We frequently use the following simple observation. Let $\ell \in \N$ and let $g_1, \ldots, g_{\ell} \in G$ be such that their product $g_1 \cdot \ldots \cdot g_{\ell} = 1_G$.
Then $g_{\ell} g_1 \cdot \ldots \cdot g_{\ell - 1} = g_{\ell}(g_1 \cdot \ldots \cdot g_{\ell})g_{\ell}^{-1} = g_{\ell}g_{\ell}^{-1} = 1_G$. Iterating this argument shows that
\[
  g_j \cdot \ldots \cdot g_{\ell} g_1 \cdot \ldots \cdot g_{j-1} = 1_G \quad \text{for all $j \in [1, \ell]$} \,.
\]

\smallskip
The  {\it opposite group} $G^{\op} = (G, \cdot^{\op})$ of $G=(G, \cdot)$ has the same underlying set and its group operation is defined by $g_1 \cdot^{\op} g_2 := g_2 \cdot g_1 = g_2g_1$ for all $g_1, g_2 \in G$. The map $\psi \colon G \to G^{\op}$, defined by $\psi (g)=g^{-1}$ for all $g \in G$, is a group isomorphism. Let $G_1$ and $G_2$ be groups and let $\varphi \colon G_1 \to G_2$ be a map. Then $\varphi^{\op} \colon G_1 \to G_2^{\op}$ is defined by $\varphi^{\op} (g) = \varphi ( {\ec g} )$ for all $g \in G_1$, and $\varphi^{\op}$ is a homomorphism if and only if $\varphi (g_1g_2) = \varphi (g_2) \varphi (g_1)$ for all $g_1, g_2 \in G_1$. In this case, $\varphi$ is called an {\it anti-homomorphism}.

\smallskip
\section{Proof of  Theorem \ref{1.1}} \label{3}
\smallskip

Before we start with the actual proof, we would like to mention that a result, weaker than Theorem \ref{1.1}, is already known. Indeed, for $i \in [1,2]$, let $G_i$ be a torsion group with $|G_i|>2$. Then the class group of the Krull monoid $\widehat{\mathcal B (G_i)}$ is isomorphic to $G_i/G_i'$, where $G_i'$ is the commutator subgroup of $G_i$ (\cite[Proposition 3.3 and Lemma 3.4]{Fa-Zh22a}). Thus, if the monoids of product-one sequences $\mathcal B (G_1)$ and $\mathcal B (G_2)$ are isomorphic, then the factor groups
\[
  G_1/G_1' \quad \mbox{ and } \quad G_2/G_2'
\]
are isomorphic. We will show that even $G_1$ and $G_2$ are isomorphic.

\smallskip
\begin{proof}[Proof of Theorem \ref{1.1}]
Let $G_1$ and $G_2$ be groups and suppose that $G_1$ is a torsion group. Clearly, if the groups are isomorphic, then their monoids of product-one sequences are isomorphic.

Conversely,  suppose that the monoids $\mathcal B (G_1)$ and $\mathcal B (G_2)$ are isomorphic.  By \cite[Theorem 3.14]{Fa-Zh22a}, a group $G$ is abelian if and only if $\mathcal B (G)$ is Krull if and only if $\mathcal B (G)$ is completely integrally closed. Thus, if $G_1$ or $G_2$ is abelian, then $\mathcal B (G_1)$ and $\mathcal B (G_2)$ are both Krull, and both groups are abelian. Thus, the claim follows from \cite[Corollary 2.5.7]{Ge-HK06a}. Now, we may suppose that neither $G_1$ nor $G_2$ is abelian. In particular, we have $|G_1| \ge 6$ and $|G_2|\ge 6$.

Let $i \in [1,2]$. Since the free abelian monoid $\mathcal F (G_i)$ is  completely integrally closed and since $\mathcal B (G_i) \subset \mathcal F (G_i)$, it follows that $\widehat{\mathcal B (G_i)} \subset \mathcal F (G_i)$. Since $G_1$ is a  torsion group, we have $\widetilde{\mathcal B (G_1)} = \widehat{\mathcal B (G_1)}$ by \cite[Lemma 3.4]{Fa-Zh22a}.

Since $\mathcal B (G_1)$ and $\mathcal B (G_2)$ are isomorphic, we obtain an isomorphism between the quotient groups $\mathsf q \big( \mathcal B (G_1) \big)$ and $\mathsf q \big( \mathcal B (G_2) \big)$, and an isomorphism between their complete integral closures, say $\phi \colon \widehat{\mathcal B (G_1)} \to \widehat{\mathcal B (G_2)}$.
By \cite[Proposition 3.3 and Lemma 3.4]{Fa-Zh22a}, the inclusion $\widehat{\mathcal B (G_i)} \hookrightarrow \mathcal F (G_i)$ is a divisor theory (see Section \ref{2}).
By the Uniqueness Theorem for divisor theories (\cite[Theorem 2.4.7]{Ge-HK06a}), there exists an isomorphism $\varphi \colon \mathcal F (G_1) \to \mathcal F (G_2)$ satisfying $\varphi |_{\widehat{\mathcal B (G_1)}} = \phi$. Every monoid isomorphism between free abelian monoids stems from a bijection between the basis sets.
Thus, we have a bijection between the groups, which we again denote by $\varphi$. Thus,  it remains to prove that $\varphi  \colon G_1 \to G_2$ is either a group homomorphism or a group anti-homomorphism. This will be done in a series of eight assertions.

\begin{enumerate}
\item[{\bf A1.}]  For all $g \in G_1$, we have $\ord(g) = \ord \big(\varphi(g)\big)$.
                        In particular, $G_2$ is a torsion group and $\varphi (1_{G_1}) = 1_{G_2}$.
                        Moreover, for every $S = g_1 \bdot \ldots \bdot g_{\ell} \in \mathcal F (G_1)$, we have $\varphi (S) = \varphi (g_1) \bdot \ldots \bdot \varphi (g_{\ell})$, and
                        \[
                          S \in \mathcal B (G_1) \quad \mbox{ if and only if } \quad \varphi (S) \in \mathcal B (G_2) \,.
                         \]

\item[{\bf A2.}]  For all $g \in G_1$, we have $\varphi (g^{-1} ) = \varphi (g)^{-1}$.

\item[{\bf A3.}]  For all $g_1, g_2 \in G_1$, we have
                  \[
                    \varphi (g_1 g_2) = \varphi (g_1) \varphi (g_2) \quad \mbox{ or } \quad \varphi (g_1 g_2) = \varphi (g_2) \varphi (g_1) \,.
                  \]
                  In particular, we have $\varphi (g^{n}) = \varphi (g)^{n}$ for every $n \in \mathbb Z$ and every $g \in G_1$.

\item[{\bf A4.}]  For all $g_1, g_2 \in G_1$, we have
                  \[
                    g_1 g_2 \neq g_2 g_1 \quad \mbox{ if and only if } \quad \varphi (g_1) \varphi (g_2) \neq \varphi (g_2) \varphi (g_1) \,,
                  \]
                  and hence also
                  \begin{equation} \label{eq:comm}
                  g_1 g_2 = g_2 g_1 \quad \mbox{ if and only if } \quad \varphi (g_1) \varphi (g_2) = \varphi (g_2) \varphi (g_1) \,.
                  \end{equation}

\item[{\bf A5.}] There are no three elements $g_1, g_2, g_3 \in G_1$ with the following properties:
                 \begin{enumerate}
                 \smallskip
                 \item[(i)] $\varphi (g_1 g_2) = \varphi (g_1) \varphi (g_2)$ and $\varphi (g_1 g_3) = \varphi (g_3) \varphi (g_1)$.
                 \item[(ii)] $g_1 g_2 \neq g_2 g_1$ and $g_1 g_3 \neq g_3 g_1$.
                 \item[(iii)] $g_2 g_3 = g_3 g_2$.
                 \end{enumerate}

\smallskip
\item[{\bf A6.}] If $g_1, g_2, g_3 \in G_1$ are such that $g_1 g_2 \neq g_2 g_1$ with $\varphi (g_1 g_2) = \varphi (g_1) \varphi (g_2)$ and $g_1 g_3 \neq g_3 g_1$ with $\varphi (g_1 g_3) = \varphi (g_3) \varphi (g_1)$, then
                 \[
                   \begin{aligned}
                     \varphi (g_1 g^{-1}_2) = \varphi (g_1) \varphi (g^{-1}_2) \quad & \mbox{ and  } \quad \varphi (g^{-1}_2 g_1) = \varphi (g^{-1}_2) \varphi (g_1) \,, \\
                     \varphi (g_1 g^{-1}_3) = \varphi (g^{-1}_3) \varphi (g_1) \quad & \mbox{ and } \quad \varphi (g^{-1}_3 g_1) = \varphi (g_1) \varphi (g^{-1}_3) \,.
                   \end{aligned}
                 \]

\item[{\bf A7.}] The sets $A = \{ g \in G_1 \colon \varphi (gh) = \varphi (h) \varphi (g) \mbox{ for all } h \in G_1 \}$ and $B = \{ g \in G_1 \colon \varphi (gh) = \varphi (g) \varphi (h) \mbox{ for all } h \in G_1 \}$ are both subgroups of $G_1$.

\smallskip
\item[{\bf A8.}] $\varphi \colon G_1 \to G_2$ is either a group isomorphism or a group anti-isomorphism.
\end{enumerate}
Clearly, if {\bf A8} holds, then $G_1$ and $G_2$ are isomorphic.

\smallskip
\begin{proof}[Proof of {\bf A1.}]
Let $S = g_1 \bdot \ldots \bdot g_{\ell} \in \mathcal F (G_1)$. Since $\varphi \colon \mathcal F (G_1) \to \mathcal F (G_2)$ is a monoid homomorphism, $\varphi (S)$ has the asserted form. Since $\varphi$ stems from an isomorphism from $\mathcal B (G_1) $ to $\mathcal B (G_2)$, we have that $S \in \mathcal B (G_1)$ if and only if $\varphi (S) \in \mathcal B (G_2)$. Thus, if $g \in G_1$, then $g^{[\ord (g)]} \in \mathcal B (G_1)$, whence
\[
\varphi (g)^{[\ord (g)]} = \varphi \big( g^{[\ord (g)]} \big) \in \mathcal B (G_2) \,,
\]
and hence $\ord ( \varphi (g) ) $ divides $\ord (g)$. Repeating the argument with $\varphi^{-1}$, we infer that $\ord (g) = \ord ( \varphi (g))$. In particular, $G_2$ is a torsion group.
 \qedhere ({\bf A1})
\end{proof}

\smallskip
\begin{proof}[Proof of {\bf A2.}]
Let $g \in G_1$. Then $g \bdot g^{-1} \in \mathcal A (G_1)$, whence $\varphi (g) \bdot \varphi (g^{-1}) \in \mathcal A (G_2)$.
Thus, we either have $\varphi (g) \varphi (g^{-1}) = 1_{G_2}$ or $\varphi (g^{-1}) \varphi (g) = 1_{G_2}$. This implies that $\varphi (g^{-1})$ is the inverse of  $\varphi (g)$. \qedhere ({\bf A2})
\end{proof}

\smallskip
\begin{proof}[Proof of {\bf A3.}]
Let $g_1, g_2 \in G_1$.
Since $(g_1g_2) \bdot g^{-1}_1 \bdot g^{-1}_2 \in \mathcal B (G_1)$, it follows that $\varphi (g_1 g_2) \bdot \varphi (g^{-1}_1) \bdot \varphi (g^{-1}_2) \in \mathcal B (G_2)$.
From the product-one equation, we infer by {\bf A2} that
\[
 \varphi (g_1 g_2) = \varphi (g_1) \varphi (g_2) \quad \mbox{ or } \quad \varphi (g_1 g_2) = \varphi (g_2) \varphi (g_1) \,.
\]
In particular, if $g_1 = g_2$, then $\varphi (g^{2}_1) = \varphi (g_1)^{2}$, whence an inductive argument ensures that $\varphi (g^{n}_1) = \varphi (g_1)^{n}$ for all   $n \in \N$.
Moreover, if we replace $g_1 = g_2$ with $g^{-1}_1$, then it follows by {\bf A2} that $\varphi (g^{-2}_1) = \varphi (g_1)^{-2}$, whence an inductive argument ensures again that $\varphi (g^{-n}_1) = \varphi (g_1)^{-n}$ for all  $n \in \N$. \qedhere ({\bf A3})
\end{proof}

\smallskip
\begin{proof}[Proof of {\bf A4.}]
Let $g_1, g_2 \in G_1$.

(i) Suppose that $g_1g_2 \ne g_2g_1$. Since $\varphi$ is bijective, it follows that $\varphi (g_1 g_2) \neq \varphi (g_2 g_1)$.
By {\bf A3}, we have that
\[
\varphi (g_1 g_2) = \varphi (g_1) \varphi (g_2) \quad \text{ or } \quad  \varphi (g_1 g_2) = \varphi (g_2) \varphi (g_1)
\]
and
\[
  \varphi (g_2 g_1) = \varphi (g_1) \varphi (g_2) \quad \text{ or } \quad \varphi (g_2 g_1) = \varphi (g_2) \varphi (g_1) \,.
\]
Since $\varphi (g_1 g_2) \neq \varphi (g_2 g_1)$, we infer that $\varphi (g_1) \varphi (g_2) \neq \varphi (g_2) \varphi (g_1)$.

(ii) Suppose that $\varphi (g_1) \varphi (g_2) \neq \varphi (g_2) \varphi (g_1)$. Applying the inverse map $\varphi^{-1}$, the assertion follows by the same argument.     \qedhere ({\bf A4})
\end{proof}

\smallskip
We will use the following fact, which can be directly derived from {\bf A3} and {\bf A4}, without further mention: If $gh \neq hg$ for $g, h \in G_1$, then $\varphi (gh) = \varphi (g) \varphi (h)$ implies that $\varphi (hg) = \varphi (h) \varphi (g)$, while $\varphi (gh) = \varphi (h) \varphi (g)$ implies that $\varphi (hg) = \varphi (g) \varphi (h)$.

\smallskip
\begin{proof}[Proof of {\bf A5.}]
Assume to the contrary that there are three elements $g_1, g_2, g_3 \in G_1$ satisfying the given properties (i)-(iii).
Then, since $\varphi$ is bijective, (ii) ensures that
\[
  \varphi (g_1 g_2) \neq \varphi (g_2 g_1) \quad \text{ and } \quad \varphi (g_1 g_3) \neq \varphi (g_3 g_1) \,,
\]
whence we infer, by {\bf A3} and (i), that
\begin{equation} \label{eq:anti2}
  \varphi (g_2 g_1) = \varphi (g_2) \varphi (g_1) \quad \mbox{ and } \quad \varphi (g_3 g_1) = \varphi (g_1) \varphi (g_3) \,.
\end{equation}
Now, we claim that, for all $n \in \N$, we have the following identities:
\[
  \begin{aligned}
    (\alpha_1) \quad \hspace{42pt} g_1^ng_2 \ne g_2g_1^n \quad & \mbox{ and } \quad (\alpha_2) \quad g_1^ng_3 \ne g_3g_1^n \,, \\
    (\beta_1) \quad \varphi (g_1^ng_2) = \varphi (g_1^n)\varphi (g_2) \quad & \mbox{ and } \quad (\beta_2) \quad \varphi (g_1^ng_3) = \varphi( g_3) \varphi (g_1^n) \,, \\
    (\gamma_1) \quad \varphi (g_2 g^{n}_1) = \varphi (g_2) \varphi (g^{n}_1) \quad & \mbox{ and } \quad (\gamma_2) \quad \varphi (g_3 g^{n}_1) = \varphi (g^{n}_1) \varphi (g_3) \,.
  \end{aligned}
\]
Clearly, this gives a contradiction for $n=\ord (g_1)$. We proceed by induction on $n$. For $n=1$, the claim holds. Let $n \ge 2$ and suppose the claim holds for all positive integers smaller than $n$.
Let $i, j \in \mathbb N$ with $i + j = n$. Then, by the inductive hypothesis, we obtain the following properties:
\begin{equation} \label{eq:4}
  g^{i}_1 g_2 \neq g_2 g^{i}_1 \quad \mbox{ and } \quad g^{i}_1 g_3 \neq g_3 g^{i}_1 \,,
\end{equation}
\begin{equation} \label{eq:5}
  \varphi \big( g^{i}_1 g_2 \big) = \varphi \big( g^{i}_1 \big) \varphi (g_2) \quad \mbox{ and } \quad \varphi \big( g^{i}_1 g_3 \big) = \varphi (g_3) \varphi \big( g^{i}_1 \big) \,,
\end{equation}
\begin{equation} \label{eq:6}
  \varphi \big( g_2 g^{i}_1 \big) = \varphi (g_2) \varphi \big( g^{i}_1 \big) \quad \mbox{ and } \quad \varphi \big( g_3 g^{i}_1 \big) = \varphi \big( g^{i}_1 \big) \varphi (g_3) \,,
\end{equation}

\noindent
and also

\begin{equation} \label{eq:7}
  g^{j}_1 g_2 \neq g_2 g^{j}_1 \quad \mbox{ and } \quad g^{j}_1 g_3 \neq g_3 g^{j}_1 \,,
\end{equation}
\begin{equation} \label{eq:8}
  \varphi \big( g^{j}_1 g_2 \big) = \varphi \big( g^{j}_1 \big) \varphi (g_2) \quad \mbox{ and } \quad \varphi \big( g^{j}_1 g_3 \big) = \varphi (g_3) \varphi \big( g^{j}_1 \big) \,,
\end{equation}
\begin{equation} \label{eq:9}
  \varphi \big( g_2 g^{j}_1 \big) = \varphi (g_2) \varphi \big( g^{j}_1 \big) \quad \mbox{ and } \quad \varphi \big( g_3 g^{j}_1 \big) = \varphi \big( g^{j}_1 \big) \varphi (g_3) \,.
\end{equation}

\noindent
We will need the following equations, namely that
\begin{equation} \label{eq:case1-4}
  \varphi \big( g^{j}_1 g_3 \big) \varphi \big( g^{i}_1 g_2 \big) \overset{(\ref{eq:8})}{=} \varphi (g_3) \varphi \big( g^{j}_1 \big) \varphi \big( g^{i}_1 g_2 \big) \overset{(\ref{eq:5})}{=} \varphi (g_3) \varphi \big( g^{j}_1 \big) \varphi \big( g^{i}_1 \big) \varphi (g_2) \overset{{\bf A3}}{=} \varphi (g_3) \varphi (g^{n}_1) \varphi (g_2) \,.
\end{equation}

\smallskip
\noindent
$\underline{\text{Proof of} \ (\beta_2).}$
Since $g^{n}_1 g_3 = g^{j}_1 \big( g^{i}_1 g_3 \big)$, {\bf A3} implies  that either
\[
  \varphi (g^{n}_1 g_3) = \varphi \big( g^{j}_1 \big) \varphi \big( g^{i}_1 g_3 \big) \quad \mbox{ or } \quad \varphi (g^{n}_1 g_3) = \varphi \big( g^{i}_1 g_3 \big) \varphi \big( g^{j}_1 \big) \,.
\]
If the second equation holds, then
\[
  \varphi (g^{n}_1 g_3) = \varphi \big( g^{i}_1 g_3 \big) \varphi \big( g^{j}_1 \big) \overset{(\ref{eq:5})}{=} \varphi (g_3) \varphi \big( g^{i}_1 \big) \varphi \big( g^{j}_1 \big) \overset{{\bf A3}}{=} \varphi (g_3) \varphi (g^{n}_1) \,,
\]
whence $(\beta_2)$ holds. Assume to the contrary that the first equation holds, whence
$\varphi (g^{n}_1 g_3) = \varphi \big( g^{j}_1 \big) \varphi \big( g^{i}_1 g_3 \big)$.
Then, we have
\begin{equation} \label{eq:11n3}
  \varphi (g^{n}_1 g_3) = \varphi \big( g^{j}_1 \big) \varphi \big( g^{i}_1 g_3 \big) \overset{(\ref{eq:5})}{=} \varphi \big( g^{j}_1 \big) \varphi (g_3) \varphi \big( g^{i}_1 \big) \overset{(\ref{eq:9})}{=} \varphi \big( g_3 g^{j}_1 \big) \varphi \big( g^{i}_1 \big) \,.
\end{equation}
Hence, $\varphi (g^{n}_1 g_3) \bdot \varphi \big( g_3 g^{j}_1 \big)^{-1} \bdot \varphi \big( g^{i}_1 \big)^{-1} \in \mathcal B (G_2)$, and so we infer by {\bf A1} and {\bf A2} that $g^{n}_1 g_3 \bdot g^{-j}_1 g^{-1}_3 \bdot g^{-i}_1 \in \mathcal B (G_1)$.
Since $g^{j}_1 g_3 \neq g_3 g^{j}_1$, the product-one equation ensures that $g^{n}_1 g_3 = g_3 g^{n}_1$, equivalently $\varphi (g^{n}_1) \varphi (g_3) = \varphi (g_3) \varphi (g^{n}_1) = \varphi (g_1^ng_3)$ by (\ref{eq:comm}) and {\bf A3}. Hence,
\[
  \varphi \big( g^{j}_1 g_3 \big) \varphi \big( g^{i}_1 \big) \overset{(\ref{eq:8})}{=} \varphi (g_3) \varphi \big( g^{j}_1 \big) \varphi \big( g^{i}_1 \big) \overset{{\bf A3}}{=} \varphi (g_3) \varphi (g^{n}_1) = \varphi (g^{n}_1) \varphi (g_3) \overset{{\bf A3}}{=} \varphi (g^{n}_1 g_3) \overset{(\ref{eq:11n3})}{=} \varphi \big( g_3 g^{j}_1 \big) \varphi \big( g^{i}_1 \big) \,,
\]
and thus $\varphi \big( g^{j}_1 g_3 \big) = \varphi \big( g_3 g^{j}_1 \big)$.
Since $\varphi$ is bijective, we have $g^{j}_1 g_3 = g_3 g^{j}_1$, a contradiction to (\ref{eq:7}).

\smallskip
\noindent
$\underline{\text{Proof of} \ (\alpha_1).}$
In view of (\ref{eq:case1-4}), we obtain that
\begin{equation} \label{eq:case1-5}
  \varphi \big( g^{j}_1 g_3 \big) \varphi \big( g^{i}_1 g_2 \big) = \varphi (g_3) \varphi (g^{n}_1) \varphi (g_2) \overset{(\beta_2)}{=} \varphi (g^{n}_1 g_3) \varphi (g_2) \,,
\end{equation}
and again by {\bf A3}, we infer that either
\[
  \varphi \big( g^{j}_1 g_3 \big) \varphi \big( g^{i}_1 g_2 \big) = \varphi ((g^{n}_1 g_3) g_2) \quad \mbox{ or } \quad \varphi \big( g^{j}_1 g_3 \big) \varphi \big( g^{i}_1 g_2 \big) = \varphi (g_2 (g^{n}_1 g_3)) \,.
\]
If $\varphi \big( g^{j}_1 g_3 \big) \varphi \big( g^{i}_1 g_2 \big) = \varphi (g^{n}_1 g_3 g_2)$, then, by {\bf A1} and {\bf A2}, we get a product-one sequence $g^{j}_1 g_3 \bdot g^{i}_1 g_2 \bdot g^{-1}_2 g^{-1}_3 g^{-n}_1 \in \mathcal B (G_1)$.
Since $g_2 g_3 = g_3 g_2$ by (iii), we can see from the product-one equation that either $g^{j}_1 g_2 = g_2 g^{j}_1$ (a contradiction to (\ref{eq:7})) or $g^{i}_1 g_3 = g_3 g^{i}_1$ (a contradiction to (\ref{eq:4})).
Hence, we obtain that
\begin{equation} \label{eq:case1-6}
  \varphi \big( g^{j}_1 g_3 \big) \varphi \big( g^{i}_1 g_2 \big) = \varphi (g_2 g^{n}_1 g_3) \,,
\end{equation}
and by {\bf A3}, we also have that either
\[
  \varphi (g_2 g^{n}_1 g_3) = \varphi \big( (g^{j}_1g_3) (g^{i}_1 g_2) \big) \quad \mbox{ or } \quad \varphi (g_2 g^{n}_1 g_3) = \varphi \big( (g^{i}_1 g_2) (g^{j}_1 g_3) \big) \,.
\]
Since $g^{i}_1 g_2 \neq g_2 g^{i}_1$, we have $\varphi (g_2 g^{n}_1 g_3 ) = \varphi \big( g^{j}_1 g_3 g^{i}_1 g_2 \big)$, and  since $\varphi$ is bijective, $g_2 g^{n}_1 g_3 = g^{j}_1 g_3 g^{i}_1 g_2$, i.e., $g^{-n}_1 g_2 g^{n}_1 g^{-1}_2 = g^{-i}_1 g_3 g^{i}_1 g^{-1}_3$ (since $g_2 g_3 = g_3 g_2$).
Since $g^{i}_1 g_3 \neq g_3 g^{i}_1$, we infer that
\begin{equation} \label{eq:odd-g2}
  g^{n}_1 g_2 \neq g_2 g^{n}_1 \,,
\end{equation}
and so $(\alpha_1)$ holds.

\smallskip
\noindent
$\underline{\text{Proof of} \ (\beta_1).}$
Assume to the contrary, that $\varphi (g^{n}_1 g_2) \ne \varphi (g^{n}_1) \varphi (g_2)$. Then we have
\[
  \varphi (g^{n}_1 g_2) \overset{{\bf A3}}{=} \varphi (g_2) \varphi (g^{n}_1) \overset{{\bf A3}}{=} \varphi (g_2) \varphi \big( g^{j}_1 \big) \varphi \big( g^{i}_1 \big) \overset{(\ref{eq:9})}{=} \varphi \big( g_2 g^{j}_1 \big) \varphi \big( g^{i}_1 \big) \,,
\]
and by {\bf A1} and {\bf A2}, we obtain that $g^{n}_1 g_2 \bdot g^{-j}_1 g^{-1}_2 \bdot g^{-i}_1 \in \mathcal B (G_1)$.
From the product-one equation, we can see that either $g^{j}_1 g_2 = g_2 g^{j}_1$ (a contradiction to (\ref{eq:7})) or $g^{n}_1 g_2 = g_2 g^{n}_1$ (a contradiction to (\ref{eq:odd-g2})), whence $\varphi (g^{n}_1 g_2) = \varphi (g^{n}_1) \varphi (g_2)$, and so $(\beta_1)$ holds.

\smallskip
\noindent
$\underline{\text{Proof of} \ (\gamma_1).}$
In view of ($\alpha_1$) and ($\beta_1$), we infer by  {\bf A3} that $\varphi (g_2 g^{n}_1) = \varphi (g_2) \varphi (g^{n}_1)$, and so $(\gamma_1)$ holds.

\smallskip
\noindent
$\underline{\text{Proof of} \ (\alpha_2).}$
We have
\[
  \varphi (g_2 g^{n}_1 g_3) \overset{(\ref{eq:case1-6})}{=} \varphi \big( g^{j}_1 g_3 \big) \varphi \big( g^{i}_1 g_2 \big) \overset{(\ref{eq:case1-4})}{=} \varphi (g_3) \varphi (g^{n}_1) \varphi (g_2) \overset{(\beta_1)}{=} \varphi (g_3) \varphi (g^{n}_1 g_2) \,.
\]
If $\varphi (g_3) \varphi (g^{n}_1 g_2) = \varphi ((g^{n}_1 g_2) g_3)$, then $\varphi (g_2 g^{n}_1 g_3) = \varphi (g^{n}_1 g_2 g_3)$, and since $\varphi$ is bijective, $g_2 g^{n}_1 = g^{n}_1 g_2$, a contradiction to (\ref{eq:odd-g2}).
Thus, we infer by {\bf A3} that
\[
  \varphi \big( g^{j}_1 g_3 \big) \varphi \big( g^{i}_1 g_2 \big) = \varphi (g_3) \varphi (g^{n}_1 g_2) = \varphi (g_3 (g^{n}_1 g_2)) \,,
\]
and again by {\bf A3}, we also have that either
\[
  \varphi (g_3 g^{n}_1 g_2) = \varphi \big( (g^{j}_1 g_3) (g^{i}_1 g_2) \big) \quad \mbox{ or } \quad \varphi (g_3 g^{n}_1 g_2) = \varphi \big( (g^{i}_1 g_2) (g^{j}_1 g_3) \big) \,.
\]
Since $g^{j}_1 g_3 \neq g_3 g^{j}_1$, we have $\varphi (g_3 g^{n}_1 g_2) = \varphi \big( g^{i}_1 g_2 g^{j}_1 g_3)$, and since $\varphi$ is bijective, $g_3 g^{n}_1 g_2 = g^{i}_1 g_2 g^{j}_1 g_3$, i.e., $g^{-j}_1 g_2 g^{j}_1 g^{-1}_2 = g^{-n}_1 g_3 g^{n}_1 g^{-1}_3$ (since $g_2 g_3 = g_3 g_2$).
Since $g^{j}_1 g_2 \neq g_2 g^{j}_1$, we  infer that
\begin{equation} \label{eq:odd-g3}
  g^{n}_1 g_3 \neq g_3 g^{n}_1 \,,
\end{equation}
and so $(\alpha_2)$ holds.

\smallskip
\noindent
$\underline{\text{Proof of} \ (\gamma_2).}$ In view of ($\alpha_2$) and ($\beta_2$), we infer by {\bf A3} that $\varphi (g_3 g^{n}_1) = \varphi (g^{n}_1) \varphi (g_3)$, and so $(\gamma_2$) holds.    \qedhere ({\bf A5})
\end{proof}

\smallskip
\begin{proof}[Proof of {\bf A6.}]
Let $g_1, g_2, g_3 \in G_1$ be such that
\begin{equation} \label{eq:noncomm}
  g_1 g_2 \neq g_2 g_1 \quad \mbox{ and } \quad g_1 g_3 \neq g_3 g_1 \,,
\end{equation}
\begin{equation} \label{eq:var-noncomm}
  \varphi (g_1 g_2) = \varphi (g_1) \varphi (g_2) \quad \mbox{ and } \quad \varphi (g_1 g_3) = \varphi (g_3) \varphi (g_1) \,.
\end{equation}

\smallskip
(i) First we show that $\varphi (g_1 g^{-1}_2) = \varphi (g_1) \varphi (g^{-1}_2)$.
By {\bf A3}, we assume to the contrary that $\varphi (g_1 g^{-1}_2) = \varphi (g^{-1}_2) \varphi (g_1)$. This implies that
\[
  \varphi (g_1 g_2) \varphi (g_1 g^{-1}_2) \overset{(\ref{eq:var-noncomm})}{=} \varphi (g_1) \varphi (g_2) \varphi (g^{-1}_2) \varphi (g_1) \overset{{\bf A2}}{=} \varphi (g_1) \varphi (g_1) \overset{{\bf A3}}{=} \varphi (g^{2}_1) \,.
\]
Thus, by {\bf A1} and {\bf A2}, we get a product-one sequence $g_1 g_2 \bdot g_1 g^{-1}_2 \bdot g^{-2}_1 \in \mathcal B (G_1)$.
From the product-one equation, we infer that $g_1 g_2 = g_2 g_1$, a contradiction to (\ref{eq:noncomm}).

\smallskip
(ii) Next, we show that  $\varphi (g^{-1}_2 g_1) = \varphi (g^{-1}_2) \varphi (g_1)$. Since $\varphi$ is bijective,  (\ref{eq:noncomm}) implies that $\varphi (g^{-1}_2 g_1) \ne \varphi (g_1 g^{-1}_2)$ . Hence,  by (i) and {\bf A3}, we infer that $\varphi (g^{-1}_2 g_1) = \varphi (g^{-1}_2) \varphi (g_1)$.

\smallskip
(iii) We show that $\varphi (g_1 g^{-1}_3) = \varphi (g^{-1}_3) \varphi (g_1)$. {\bf A3} implies that either $\varphi (g_1 g^{-1}_3) = \varphi (g_1) \varphi (g^{-1}_3)$ or $\varphi (g_1 g^{-1}_3) = \varphi (g^{-1}_3) \varphi (g_1)$.
Assume to the contrary, that $\varphi (g_1 g^{-1}_3) = \varphi (g_1) \varphi (g^{-1}_3)$. Then
\[
  \varphi (g_1 g^{-1}_3) \varphi (g_1 g_3) \overset{(\ref{eq:var-noncomm})}{=} \varphi (g_1) \varphi (g^{-1}_3) \varphi (g_3) \varphi (g_1) \overset{{\bf A2}}{=} \varphi (g_1) \varphi (g_1) \overset{{\bf A3}}{=} \varphi (g^{2}_1) \,.
\]
Thus, by {\bf A1} and {\bf A2}, we get a product-one sequence $g_1 g^{-1}_3 \bdot g_1 g_3 \bdot g^{-2}_1 \in \mathcal B (G_1)$.
From the product-one equation, we obtain that $g_1 g_3 = g_3 g_1$, a contradiction to (\ref{eq:noncomm}).

\smallskip
(iv) We show that $\varphi ( g^{-1}_3 g_1) =  \varphi (g_1) \varphi (g^{-1}_3)$.
Since $\varphi$ is bijective,  (\ref{eq:noncomm}) implies that $ \varphi (g^{-1}_3 g_1) \ne \varphi (g_1 g^{-1}_3)$. Thus,   by (iii) and {\bf A3}, we infer  that $\varphi (g^{-1}_3 g_1) = \varphi (g_1) \varphi (g^{-1}_3)$.  \qedhere ({\bf A6})
\end{proof}

\smallskip
\begin{proof}[Proof of {\bf A7.}]
Since $\varphi (1_{G_1}) = 1_{G_2}$ by {\bf A1}, it follows that $\varphi (1_{G_1} x) = \varphi (x) = \varphi (x) 1_{G_2} = \varphi (x) \varphi (1_{G_1})$ for all $x \in G_1$, and thus $1_{G_1} \in A$.
For $g_1, g_2 \in A$ and $x \in G_1$,
\[
  \varphi ((g_1g_2)x) = \varphi (g_2 x) \varphi (g_1) = \varphi (x) \varphi (g_2) \varphi (g_1) = \varphi (x) \varphi (g_1g_2) \,,
\]
and hence $g_1g_2 \in A$.
Now let $g \in A$ and $x \in G_1$.
If $gx = xg$, then
\[
  \begin{aligned}
    \varphi (g^{-1} x) & = \varphi ((x^{-1}g)^{-1}) \overset{{\bf A2}}{=} \varphi (x^{-1}g)^{-1} = \varphi (gx^{-1})^{-1} = \big( \varphi (x^{-1})\varphi (g) \big)^{-1}  \\
                       & \overset{(\ref{eq:comm})}{=} \big( \varphi (g) \varphi (x^{-1}) \big)^{-1} = \varphi (x^{-1})^{-1} \varphi (g)^{-1} \overset{{\bf A2}}{=} \varphi (x) \varphi (g^{-1}) \,.
  \end{aligned}
\]
If $gx \neq xg$, then
\[
  \varphi (g^{-1} x) = \varphi ((x^{-1}g)^{-1}) \overset{{\bf A2}}{=} \varphi (x^{-1}g)^{-1} \overunderset{(\ref{eq:comm})}{{\bf A3}}{=} \big( \varphi (g) \varphi (x^{-1}) \big)^{-1} = \varphi (x^{-1})^{-1} \varphi (g)^{-1} \overset{{\bf A2}}{=} \varphi (x) \varphi (g^{-1}) \,.
\]
In either case, we infer that $g^{-1} \in A$, and hence $A$ is a subgroup of $G_1$.
The same argument shows that $B$ is also a subgroup of $G_1$.    \qedhere ({\bf A7})
\end{proof}

\smallskip
\begin{proof}[Proof of {\bf A8.}]
Let $A$ and $B$ be subgroups of $G_1$ as defined in {\bf A7}.
We have to show that, for all $g \in G_1$, either $g \in A$ or $g \in B$ holds.
This implies that $G_1 = A \cup B$.
Since a group is not the union of two proper subgroups, we obtain that either $G_1 = B$ and $\varphi$ is an isomorphism, or that $G_1 = A$ and $\varphi$ is an anti-homomorphism, whence we are done.
Now, assume to the contrary that there exists $g_1 \in G_1$ such that $g_1$ is neither in $A$ nor in $B$.
Then, there are $g_2, g_3 \in G_1$ such that
\begin{equation} \label{eq:anti0}
  \varphi (g_1g_2) \ne \varphi (g_2)\varphi (g_1) \quad \text{ and } \quad \varphi (g_1g_3) \ne \varphi (g_1)\varphi (g_3) \,,
\end{equation}
and hence {\bf A3} implies that
\begin{equation} \label{eq:anti}
 \varphi (g_1 g_2) = \varphi (g_1) \varphi (g_2) \quad \mbox{ and } \quad \varphi (g_1 g_3) = \varphi (g_3) \varphi (g_1) \,.
\end{equation}
We assert that either
\[
g_1 g_2 = g_2 g_1 \quad \text{ or} \quad  g_1 g_3 = g_3 g_1 \,.
\]
If this holds, then,
in view of (\ref{eq:comm}), we obtain  that $\varphi (g_1)$ commutes  either with $\varphi (g_2)$ or with $\varphi (g_3)$, a contradiction. So, let us assume to the contrary that
\begin{equation} \label{eq:assumption}
  g_1 g_2 \neq g_2 g_1 \quad \text{ and } \quad g_1 g_3 \neq g_3 g_1 \,.
\end{equation}
In view of (\ref{eq:anti}) and (\ref{eq:assumption}), {\bf A5} ensures that
\begin{equation} \label{eq:g_2g_3}
  g_2 g_3 \neq g_3 g_2 \,.
\end{equation}
Moreover, since $\varphi$ is bijective, {\bf A3} and (\ref{eq:anti}) imply
\begin{equation} \label{eq:anti3}
  \varphi (g_2 g_1) = \varphi (g_2) \varphi (g_1) \quad \mbox{ and } \quad \varphi (g_3 g_1) = \varphi (g_1) \varphi (g_3) \,.
\end{equation}

\noindent
In order to avoid some case redundancies, we consider the map $\varphi^{\op} \colon G_1 \to G^{\op}_2$, defined by $\varphi^{\op} (g) = \varphi (g)$ for all $g \in G_1$ (see the discussion at the end of Section \ref{2}).
Obviously, the assertions {\bf A1}-{\bf A4} hold true for $\varphi^{\op}$.
Since $\varphi^{\op} (g_1g_2) = \varphi^{\op} (g_2) \cdot^{\op} \varphi^{\op} (g_1)$ and $\varphi^{\op} (g_1 g_3) = \varphi^{\op} (g_1) \cdot^{\op} \varphi^{\op} (g_3)$, it is easy to see that {\bf A5} and {\bf A6} also hold true for $\varphi^{\op}$ by swapping the role between $g_2$ and $g_3$.

By {\bf A3}, we obtain that $\varphi (g_2 g_3) \in \{ \varphi (g_2) \varphi (g_3), \, \varphi (g_3) \varphi (g_2) \}$.
If $\varphi (g_2 g_3) = \varphi (g_3) \varphi (g_2)$, then $\varphi^{\op} (g_2 g_3) = \varphi^{\op} (g_2) \cdot^{\op} \varphi^{\op} (g_3)$, and thus by replacing $\varphi$ by $\varphi^{\op}$ if necessary, we may assume without further restriction that
\begin{equation} \label{eq:var_g_2g_3}
  \varphi (g_2 g_3) = \varphi (g_2) \varphi (g_3) \quad \mbox{ and } \quad \varphi (g_3 g_2) = \varphi (g_3) \varphi (g_2) \,,
\end{equation}
where the second equality follows from the first and {\bf A3}.
Then, we have
\[
  \varphi (g_1 g^{-1}_2) \varphi (g_2 g_3) \overset{{\bf A6}}{=} \varphi (g_1) \varphi (g^{-1}_2) \varphi (g_2) \varphi (g_3) \overset{{\bf A2}}{=} \varphi (g_1) \varphi (g_3) \overset{(\ref{eq:anti3})}{=} \varphi (g_3 g_1) \,,
\]
whence we infer by {\bf A1} and {\bf A2} that $g_1 g^{-1}_2 \bdot g_2 g_3 \bdot g^{-1}_1 g^{-1}_3 \in \mathcal B (G_1)$.
Since $g_1 g_3 \neq g_3 g_1$, the product-one equation shows that $g^{-1}_3 g_2 g_3 = g_1 g_2 g^{-1}_1$.
Furthermore,
\[
  \varphi (g_2 g_3) \varphi (g_1 g^{-1}_3) \overset{{\bf A6}}{=} \varphi (g_2) \varphi (g_3) \varphi (g^{-1}_3) \varphi (g_1) \overset{{\bf A2}}{=} \varphi (g_2) \varphi (g_1) \overset{(\ref{eq:anti3})}{=} \varphi (g_2 g_1) \,,
\]
whence we infer, again by {\bf A1} and {\bf A2}, that $g_2 g_3 \bdot g_1 g^{-1}_3 \bdot g^{-1}_1 g^{-1}_2 \in \mathcal B (G_1)$.
Since $g_1 g_3 \neq g_3 g_1$, the product-one equation shows that $g^{-1}_3 g_2 g_3 = g^{-1}_1 g_2 g_1$.
Therefore, we obtain that
\begin{equation} \label{eq:g2-comm}
  g_1 g_2 g^{-1}_1 = g^{-1}_3 g_2 g_3 = g^{-1}_1 g_2 g_1 \,, \quad \mbox{ whence } \,\, g^{2}_1 g_2 = g_2 g^{2}_1 \,.
\end{equation}
We distinguish two cases.

\smallskip
\noindent
CASE 1: \ $g_3 g^{2}_1 \neq g^{2}_1 g_3$.

By {\bf A3}, we have either $\varphi (g_3 g^{2}_1) = \varphi (g_3) \varphi (g^{2}_1)$ or $\varphi (g_3 g^{2}_1) = \varphi (g^{2}_1) \varphi (g_3)$.
If $\varphi (g_3 g^{2}_1) = \varphi (g_3) \varphi (g^{2}_1)$, then
\[
  \varphi (g_3 g^{2}_1) \overset{{\bf A3}}{=} \varphi (g_3) \varphi (g_1) \varphi (g_1) \overset{(\ref{eq:anti})}{=} \varphi (g_1 g_3) \varphi (g_1) \,,
\]
whence, by {\bf A1} and {\bf A2}, we get a product-one sequence $g_3 g^{2}_1 \bdot g^{-1}_3 g^{-1}_1 \bdot g^{-1}_1 \in \mathcal B (G_1)$.
From the product-one equation, we can see that either $g_1 g_3 = g_3 g_1$ (a contradiction to (\ref{eq:assumption})) or $g^{2}_1 g_3 = g_3 g^{2}_1$ (a contradiction to the hypothesis of CASE 1).
Thus, we consequently have the following conditions:
\[
  \begin{aligned}
    g_3 g_2 \neq g_2 g_3 & \quad \mbox{ and } \quad g_3 g^{2}_1 \neq g^{2}_1 g_3 \,, \\
    \varphi (g_3 g_2) = \varphi (g_3) \varphi (g_2) & \quad \mbox{ and } \quad \varphi (g_3 g^{2}_1) = \varphi (g^{2}_1) \varphi (g_3) \,.
  \end{aligned}
\]
In view of (\ref{eq:g2-comm}), we obtain a  triple $(g_3, g_2, g^{2}_1)$ satisfying the three conditions  in {\bf A5},  a contradiction.

\smallskip
\noindent
CASE 2: \ $ g_3 g^{2}_1 = g^{2}_1 g_3$.

Then, we have
\[
  \varphi (g_1 g_3) \varphi (g_1 g_2) \overset{(\ref{eq:anti})}{=} \varphi (g_3) \varphi (g_1) \varphi (g_1) \varphi (g_2) \overset{{\bf A3}}{=} \varphi (g_3) \varphi (g^{2}_1) \varphi (g_2) \overunderset{(\ref{eq:comm})}{{\bf A3}}{=} \varphi (g_3 g^{2}_1) \varphi (g_2) \,,
\]
and it follows by {\bf A3} that
\[
  \varphi (g_1 g_3) \varphi (g_1 g_2) = \varphi ((g_3 g^{2}_1) g_2) \quad \mbox{ or } \quad \varphi (g_1 g_3) \varphi (g_1 g_2) = \varphi (g_2 (g_3 g^{2}_1)) \,.
\]
If the latter case holds true, then since $g^{2}_1g_2 = g_2 g^{2}_1$ and $g^{2}_1 g_3 = g_3 g^{2}_1$, we obtain that
\[
  \varphi (g_2 g_3) \varphi (g^{2}_1) \overunderset{(\ref{eq:comm})}{{\bf A3}}{=} \varphi ((g_2 g_3)g^{2}_1) = \varphi (g_1 g_3)\varphi (g_1 g_2) \overunderset{(\ref{eq:anti})}{{\bf A3}}{=} \varphi (g_3) \varphi(g^{2}_1) \varphi (g_2) \overset{(\ref{eq:comm})}{=} \varphi (g_3) \varphi (g_2) \varphi (g^{2}_1) \,.
\]
Then, $\varphi (g_2) \varphi (g_3) \overset{(\ref{eq:var_g_2g_3})}{=} \varphi (g_2 g_3) = \varphi (g_3) \varphi (g_2)$, and it follows by (\ref{eq:comm}) that $g_2 g_3 = g_3 g_2$, a contradiction to (\ref{eq:g_2g_3}).
Thus, we must have that $\varphi (g_1 g_3) \varphi (g_1 g_2) = \varphi (g_3 g^{2}_1 g_2)$.
On the other hand, by {\bf A3}, we have either $\varphi (g_1g_2g_1g_3) = \varphi (g_1g_2) \varphi (g_1g_3)$ or $\varphi (g_1g_2g_1g_3) = \varphi (g_1g_3) \varphi (g_1g_2)$.
If the former case holds true, then
\[
  \varphi (g_1g_2g_1g_3) = \varphi (g_1g_2) \varphi (g_1g_3) \overset{(\ref{eq:anti})}{=} \varphi (g_1) \varphi (g_2) \varphi (g_3) \varphi (g_1) \overset{(\ref{eq:var_g_2g_3})}{=} \varphi (g_1) \varphi (g_2g_3) \varphi (g_1) \,,
\]
whence, by {\bf A1} and {\bf A2}, we get a product-one sequence $g_1g_2g_1g_3 \bdot g^{-1}_1 \bdot g^{-1}_3 g^{-1}_2 \bdot g^{-1}_1 \in \mathcal B (G_1)$.
Since $g_3 g^{2}_1 = g^{2}_1 g_3$, we can see from the product-one equation that either $g_1g_3 = g_3g_1$ or $g_1g_2 = g_2g_1$, a contradiction to (\ref{eq:assumption}).
Thus, we have $\varphi (g_1 g_2 g_1 g_3) = \varphi (g_1g_3) \varphi (g_1g_2) = \varphi (g_3 g^{2}_1 g_2)$, and since $\varphi$ is bijective, $g_1 g_2 g_1 g_3 = g_3 g^{2}_1 g_2 = g^{2}_1 g_3 g_2$, and so $g_2 (g_1 g_3) = (g_1 g_3) g_2$.
Note that $g_1 g_3 \neq g_2$, because otherwise
\[
  \begin{aligned}
    \varphi (g_1) \varphi (g_2) & \overset{(\ref{eq:anti})}{=} \varphi (g_1 g_2) = \varphi (g_1 (g_1 g_3)) = \varphi (g^{2}_1 g_3) \\
                                & \overunderset{(\ref{eq:comm})}{{\bf A3}}{=} \varphi (g_3) \varphi (g^{2}_1) \overset{{\bf A3}}{=} \varphi (g_3) \varphi (g_1) \varphi (g_1) \overset{(\ref{eq:anti})}{=} \varphi (g_1 g_3) \varphi (g_1) = \varphi (g_2) \varphi (g_1)
  \end{aligned}
\]
implies,  by (\ref{eq:comm}), that $g_1 g_2 = g_2 g_1$, a contradiction to (\ref{eq:assumption}).
Moreover, $\varphi (g_1 (g_1g_3)) = \varphi (g_1g_3) \varphi (g_1)$, for otherwise
\[
  \varphi (g_1) \varphi (g_1g_3) \overset{{\bf A3}}{=} \varphi (g_1 (g_1g_3)) = \varphi (g^{2}_1 g_3) \overunderset{(\ref{eq:comm})}{{\bf A3}}{=} \varphi (g_1)^{2} \varphi (g_3) \,,
\]
implying that $\varphi (g_1g_3) = \varphi (g_1) \varphi (g_3)$, a contradiction to (\ref{eq:anti0}). Thus,
\[
  \begin{aligned}
    g_1 g_2 \neq g_2 g_1 & \quad \mbox{ and } \quad g_1 (g_1 g_3) \neq (g_1 g_3) g_1 \,, \\
    \varphi (g_1 g_2) = \varphi (g_1) \varphi (g_2) & \quad \mbox{ and} \quad \varphi (g_1 (g_1 g_3)) = \varphi (g_1 g_3) \varphi (g_1) \,,
  \end{aligned}
\]
and since $g_2 (g_1 g_3) = (g_1 g_3) g_2$, we obtain a  triple $(g_1, g_2, g_1g_3)$ satisfying the three conditions in {\bf A5}, a contradiction.   \qedhere ({\bf A8})
\end{proof} \qedhere \end{proof}

\smallskip
\section{Proof of Corollary~\ref{1.2}} \label{4}
\smallskip

For a group $G$, let $\mathsf T (G)$ denote the set of torsion elements. It is well-known that $\mathsf T (G)$ is actually a subgroup of $G$ if $G$ is nilpotent, or if the commutator subgroup $G'$ consists of torsion elements only. Moreover, for a large class of groups, including all solvable groups for example, the commutator subgroup is a torsion group provided that all commutators have finite order (see \cite{Br90}).

\smallskip
\begin{proof}[Proof of Corollary \ref{1.2}]
We may suppose that $G_1$ and $G_2$ are not torsion groups, otherwise the claim follows from Theorem~\ref{1.1}. In particular, each group has infinitely many elements.
Since the commutator subgroups $G_1'$ and $G_2'$ are torsion,  $\mathsf T (G_1)$ and $\mathsf T (G_2)$ are groups.
By Theorem~\ref{1.1}, it suffices to prove that the monoids $\mathcal B (\mathsf T (G_1))$ and $\mathcal B ( \mathsf T (G_2))$ are isomorphic.

Let $i \in [1,2]$. Since the commutator subgroup $G_i'$ is torsion,  \cite[Lemma 3.4]{Fa-Zh22a} implies that $\widehat{\mathcal B (G_i)}$ is Krull and the inclusion
$\widehat{\mathcal B (G_i)} \hookrightarrow \mathcal F (G_i)$ is a divisor theory. The isomorphism from $\mathcal B (G_1)$ to $\mathcal B (G_2)$  induces isomorphisms between their quotient groups and their complete integral closures. If $\Phi \colon \widehat{\mathcal B (G_1)} \to \widehat{\mathcal B (G_2)}$ is this isomorphism, then the uniqueness theorem of divisor theories implies the existence of an isomorphism $\varphi \colon \mathcal F (G_1) \to \mathcal F (G_2)$ with $\varphi \mid_{\widehat{\mathcal B(G_1)}} = \Phi$. In particular, $\varphi \mid_{G_1} \colon G_1 \to G_2$ is a bijection.

Arguing as in {\bf A1} in the proof of Theorem~\ref{1.1}, we infer that $\ord (g) = \ord \big( \varphi (g) \big)$ for every $g \in \mathsf T (G_1)$.
Thus, we obtain that $\varphi \big( \mathsf T (G_1) \big) \subset  \mathsf T (G_2)$ and, by symmetry, we infer that $\varphi \big( \mathsf T (G_1) \big) =  \mathsf T (G_2)$.  This implies that $\varphi \big( \mathcal B ( \mathsf T (G_1) ) \big) = \mathcal B \big( \mathsf T (G_2) \big)$, whence $\mathcal B \big( \mathsf T (G_1) \big)$ and $\mathcal B \big( \mathsf T (G_2) \big)$ are isomorphic.
\end{proof}

\medskip
\noindent
{\bf Acknowledgement.} We would like to thank the reviewer for their time and effort. They provided a detailed list of comments which helped to improve the presentation of our manuscript.

\smallskip
\providecommand{\bysame}{\leavevmode\hbox to3em{\hrulefill}\thinspace}
\providecommand{\MR}{\relax\ifhmode\unskip\space\fi MR }
% \MRhref is called by the amsart/book/proc definition of \MR.
\providecommand{\MRhref}[2]{%
  \href{http://www.ams.org/mathscinet-getitem?mr=#1}{#2}
}
\providecommand{\href}[2]{#2}

\bigskip

\end{document}